\documentclass[runningheads,envcountsame]{llncs}
\usepackage[T1]{fontenc}
\usepackage{amssymb, amsmath}
\usepackage{xcolor}
\usepackage[style = numeric, backend = biber, alldates = year, maxbibnames=9, maxcitenames=4, doi=false, isbn=false, giveninits = true]{biblatex}
\usepackage{colonequals}

\usepackage{hyperref}
\hypersetup{pdftitle={Belyi map verification using certified path tracking}}
\hypersetup{pdfauthor={Alexandre Guillemot and John Voight}}
\hypersetup{colorlinks=true,linkcolor=blue,anchorcolor=blue,citecolor=blue}

\newcommand{\CC}{\mathbb{C}}
\newcommand{\RR}{\mathbb{R}}
\newcommand{\QQ}{\mathbb{Q}}
\newcommand{\PP}{\mathbb{P}}
\newcommand{\C}{\CC}

\newcommand{\QQbar}{\QQ^{\textup{al}}}

\newcommand{\defi}{\textsf}

\makeatletter
\let\c@equation\c@theorem

\makeatother

\usepackage{xpatch}
\xpatchbibmacro{textcite}
  {\printnames{labelname}}
  {\begingroup
     \printnames{labelname}%
   \endgroup}
  {}{}

\begin{document}
\title{Belyi map verification using certified path tracking}
\author{Alexandre Guillemot\inst{1} \and John Voight\inst{2}}
\authorrunning{A.\ Guillemot \and J.\ Voight}
\institute{Inria, Universit\'e Paris--Saclay, Palaiseau, 91120, France\\
  \email{alexandre.guillemot@inria.fr}\\
  \and School of Mathematics and Statistics, University of Sydney, NSW, 2006, Australia\\
\email{jvoight@gmail.com}}

\maketitle

\begin{abstract}
We provide an end-to-end workflow to rigorously compute the monodromy of Belyi maps from exact equations over number fields using certified homotopy continuation.  We then apply this method at scale to certify the monodromy triples of Belyi maps in the $L$-functions and Modular Forms Database (LMFDB).
  
  \keywords{Belyi maps, monodromy, certified homotopy continuation, interval arithmetic, numerical algebraic geometry}
\end{abstract}

\section{Introduction}

\subsection*{Motivation and context}

Let $X$ be a smooth projective algebraic curve over $\C$ or equivalently a compact Riemann surface.  
A \defi{Belyi map} is a nonconstant map $\varphi\colon X \to\PP^1_{\CC}$ unramified away from~$\{0, 1, \infty\}$.  \textcite{belyiGALOISEXTENSIONSMAXIMAL1980} proved that $X$ can be defined over~$\QQbar$ if and only if it admits a Belyi map.  Explicit methods for Belyi maps have seen many applications---see \textcite{sijslingComputingBelyiMaps2015} for an overview, as well as the more recent work by \textcite{robertsHurwitzBelyiMaps2018,barthComputationBelyiMaps2021,MR4709206}.  These methods take as input a permutation triple encoding the monodromy around the branch points and return candidate equations over a number field $K \subset \C$.  One can certify that the map is indeed a Belyi map with the correct ramification.

In this paper, we tackle the certification that the monodromy triple of the map indeed coincides with the given input. 
Algebraic techniques \cite{elkiesComplexPolynomialsGal2013,barthComputationBelyiMaps2021} may suffice to compute the monodromy group and the triple when uniquely determined, but they do not provide a general approach.
On the other hand, monodromy permutations can be computed from the equations using \emph{certified path tracking}: for a parametrized polynomial system $F_t$, one follows the zeros of $F_t$ along loops in the base.  For monodromy, this requires isolating the tracked zero along the whole path to ensure that path jumping or swapping does not occur \cite[\S 6.2]{beltranCertifiedNumericalHomotopy2012}, ensuring correctness.

\subsection*{Previous work}

The idea of using certified path tracking or homotopy continuation to rigorously compute the monodromy of Belyi maps is not new.
For instance, \textcite{schnepsDessinsDenfantsRiemann1994} and \textcite[][\S 2C]{MR3952010} describe predictor-corrector loops whose certification rely on a step size bound.
\textcite{deconinckComputingRiemannMatrices2001} and \textcite{bartholdiAlgorithmicConstructionHurwitz2015} present approaches that compute fibers above the points of a discretization of the parameter path and connect each consecutive fibers to recover the monodromy permutation.
However, implementing these methods to obtain a rigorous result is difficult, as they may consider implicit bounds, that roots of polynomials can be obtained exactly, or assume a model of exact computation over the reals (e.g., the BSS model) for correctness.

\textcite{MR3032682} provide the first implemented certified homotopy continuation algorithm in the Turing machine model, using exact arithmetic over the rationals and with generality well beyond Belyi maps.
An \emph{a posteriori} certification algorithm was designed by \textcite{10.1145/2608628.2608651} for Newton homotopies (those that change only the constant term), taking a heuristic tracking scheme as input to produce a certificate that no path jumpings occurred; the authors consider monodromy computations as an application.
Other methods rely on interval arithmetic, such as \textcite{kearfottIntervalStepControl1994}, as well as \textcite{guillemotValidatedNumericsAlgebraic2024,duffCertifiedHomotopyTracking2024}, based on ideas from \textcite{vanderhoevenReliableHomotopyContinuation2015}.
The preprint by \textcite{duffCertifyingGaloisMonodromy2026} addresses monodromy computations for general complex algebraic branched covers, with an emphasis on practical results and including Belyi maps.
In the univariate case, certified path tracking methods are more diverse; see the algorithms and implementations by
\textcite{marco-buzunarizSIROCCOLibraryCertified2016,kranichEpsilondeltaBoundPlane2016,xuApproachCertifyingHomotopy2018}.

\subsection*{Contribution} Using Algpath \cite{guillemotCertifiedAlgebraicPath2026,guillemotValidatedNumericsAlgebraic2024}, a certified path tracking software based on Krawczyk's operator, interval arithmetic, and Taylor models, we rigorously compute the monodromy triples of the $1111$ Belyi maps present in the LMFDB \cite{mustyDatabaseBelyiMaps2019}.  Code is available online \cite{ourcode}.  

Our contribution is not a new path-tracking algorithm, but rather we develop a complete and scalable certification pipeline for exact Belyi maps over number fields and run this on the complete set of Belyi maps in the LMFDB.  This demonstrates that recent advances make certified homotopy continuation a practical tool to compute the monodromy of finite algebraic maps.  

\section{Setup}

\subsection*{Monodromy representation}

Let $X$ be a smooth projective algebraic curve over $\CC$ and let $\varphi \colon X \to \PP^1_\CC$ be a Belyi map of degree $d$.  Put $U \colonequals \PP^1_\CC\setminus\{0,1,\infty\}$, so that $\varphi$ restricts to a topological cover $\varphi^{-1}(U)\to U$ of degree $d$.

Fix a basepoint $b \in U$ and label the fiber $\varphi^{-1}(b)=\{x_1,\dots,x_d\}$.  Analytic continuation along loops in $U$ defines a monodromy representation $\rho \colon \pi_1(U,b)\to S_d$.  Choosing standard loops $\gamma_0,\gamma_1,\gamma_\infty$ around $0,1,\infty$ with $\gamma_0\gamma_1\gamma_\infty=1$, we obtain permutations $\sigma_0,\sigma_1,\sigma_\infty$ satisfying $\sigma_0\sigma_1\sigma_\infty=1$, called the \defi{monodromy triple} of $\varphi$, well-defined up to simultaneous conjugation.  The subgroup $G=\langle \sigma_0,\sigma_1,\sigma_\infty\rangle\subseteq S_d$ is the \defi{(geometric) monodromy group}.  The cycle structure of $\sigma_s$ records the ramification above $s \in \{0,1,\infty\}$.

\subsection*{Analytic continuation via polynomial systems}

To compute this triple from equations, let $Y \subseteq \C^n$ be an affine chart cut out by polynomials $g_1,\dots,g_{n-1}$, and suppose $\varphi$ is represented on this chart by a rational function $p/q$.  Consider the polynomial system
\begin{equation}\label{eq:theory-system}
  F_t(x,z)=\bigl(g_1(x),\dots,g_{n-1}(x),\,p(x)-tq(x),\,q(x)z-1\bigr).
\end{equation}
For each $t \in U$, the regular zeros of $F_t$ are in bijection with the points of $\varphi^{-1}(t)$, since $q(x)z-1=0$ excludes the common vanishing locus of $p$ and $q$.  Over $U$, these zeros are regular and vary analytically with~$t$, so analytic continuation of zeros of~\eqref{eq:theory-system} agrees with path lifting for $\varphi$.

Therefore, if $\gamma\colon [0, 1] \to U$ is a loop based at $b$ and each regular zero of~$F_{b}$ is tracked correctly along~$\gamma$, then the induced permutation on the zeros of~$F_{b}$ is the monodromy permutation of~$\varphi$ associated to~$\gamma$.  In particular, it is enough to compute the permutations associated to loops around $0$ and~$1$. 

\section{Monodromy computations using path tracking}
\label{sec:monodromy-path-tracking}
We briefly explain the underlying algorithm of Algpath and how it applies to the computation of monodromy permutations.

\subsection*{Data structure for regular zeros of polynomial systems}
Let~$f\colon \CC^n \to \CC^n$ be a polynomial map.  We review Moore's root isolation criterion, based on Krawczyk's operator, and the associated data structure for regular zeros.  We denote by~$B$ the unit ball for the real~$\infty$-norm on~$\CC^n$, and by $I_n$ the $n \times n$ identity matrix.
\begin{theorem}[Moore's criterion]\label{thm:moore-crit}
  Let~$\rho \in (0, 1)$,~$x \in \CC^n$,~$r > 0$ and~$A \in \CC^{n \times n}$.
  Suppose that for all~$u, v \in rB$, we have
  \begin{equation}\label{cond:moore}
    -Af(x) + (I_n - A \,\mathrm{d}f(x + u))v \in \rho r B.
  \end{equation}
  Then there exists a regular zero~$\zeta$ of~$f$ in~$x + \rho r B$, and it is the unique zero of~$f$ in $x + rB$.
\end{theorem}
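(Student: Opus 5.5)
The plan is to study the Krawczyk-type map $K(y) \colonequals y - A f(y)$ on the box $x + rB$ and to obtain both existence and uniqueness from a contraction argument. The first step is to rewrite $K(x+v)$ in a form to which \eqref{cond:moore} applies. For $v \in rB$, the fundamental theorem of calculus along the segment from $x$ to $x+v$ gives $f(x+v) = f(x) + \int_0^1 \mathrm{d}f(x+sv)\,v\,\mathrm{d}s$. Hence
\[
K(x+v) - x = \int_0^1 \bigl(-Af(x) + (I_n - A\,\mathrm{d}f(x+sv))v\bigr)\,\mathrm{d}s .
\]
For each $s$ the integrand is of the form in \eqref{cond:moore} with $u = sv \in rB$. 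The set $\rho r B$ is convex and closed, so the integral, being a limit of convex combinations, also lies in $\rho r B$. Therefore $K$ maps $x + rB$ into $x + \rho r B$, which is contained in $x + rB$.

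The second step establishes contraction. For $v_1, v_2 \in rB$, the difference $K(x+v_1) - K(x+v_2)$ equals $\int_0^1 (I_n - A\,\mathrm{d}f(x+w_s))(v_1-v_2)\,\mathrm{d}s$, where $w_s = v_2 + s(v_1-v_2) \in rB$. Subtracting two instances of \eqref{cond:moore} with the same $u$ shows that $(I_n - A\,\mathrm{d}f(x+u))(v-v')$ lies in $2\rho r B$ for all $v, v' \in rB$. Since $v - v'$ ranges over all of $2rB$, linearity gives $\|I_n - A\,\mathrm{d}f(x+u)\| \le \rho < 1$ in the operator norm induced by the real $\infty$-norm. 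This yields $\|K(y_1)-K(y_2)\| \le \rho\|y_1-y_2\|$.

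By the Banach fixed point theorem on the complete set $x + rB$, $K$ has a unique fixed point $\zeta$, and $\zeta$ lies in $x + \rho r B$ by the first step. The third step converts fixed points of $K$ into zeros of $f$. The bound $\|I_n - A\,\mathrm{d}f\| \le \rho < 1$ shows via a Neumann series that $A\,\mathrm{d}f(y)$ is invertible on the box. So both $A$ and $\mathrm{d}f(y)$ are invertible, and in particular $\mathrm{d}f(\zeta)$ is invertible. Since $A$ is invertible, $f(y)=0$ holds if and only if $K(y)=y$. Hence $\zeta$ is a zero of $f$, it is regular, and it is the unique zero in $x + rB$.

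The only delicate point is the operator-norm step. The norm is the real $\infty$-norm on $\CC^n \cong \RR^{2n}$, so $I_n - A\,\mathrm{d}f$ should be treated as a real-linear map on $\RR^{2n}$ and the norm bound deduced from the image of the unit ball. This is legitimate because the argument uses only the image of a symmetric ball and never complex structure. Everything else is routine, and the integral step is justified by convexity.
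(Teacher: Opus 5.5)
Your proof is correct and complete. You use the integral mean-value form with the convexity and closedness of $\rho rB$ to get $K(x+rB)\subseteq x+\rho rB$, and the hypothesis with a fixed $u$ and $v,v'=\pm w/2$ to get $\|I_n - A\,\mathrm{d}f(x+u)\|\le\rho$; Banach's fixed-point theorem together with invertibility of $A$ and of $\mathrm{d}f$ on the box then gives existence, uniqueness, location in $x+\rho rB$, and regularity. The paper gives no argument of its own and only cites Rump and Guillemot--Lairez, and yours is the standard proof of this quantitative Krawczyk--Moore test; the classical interior-inclusion version usually gets existence from Brouwer's theorem and uniqueness from a separate nonsingularity argument, which the explicit $\rho<1$ makes unnecessary here.
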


\begin{proof}
See \cite{rumpSOLVINGALGEBRAICPROBLEMS1983,guillemotValidatedNumericsAlgebraic2024}.
\end{proof}

\begin{definition}[Moore boxes]
  Let~$\rho \in (0, 1)$.  A~\defi{$\rho$-Moore box} for~$f$ is a triple
  $(x, r, A) \in \CC^n \times \RR_{>0} \times \CC^{n \times n}$ satisfying~\eqref{cond:moore}.
\end{definition}
A \defi{Moore box} is a~$\rho$-Moore box for some~$\rho \in (0, 1)$.  By Theorem~\ref{thm:moore-crit}, a Moore box represents a unique regular zero of~$f$.  Condition~\eqref{cond:moore} is checked effectively by interval arithmetic~\cite{mooreIntroductionIntervalAnalysis2009}.  Given a~$\rho$-Moore box~$m=(x,r,A)$ and~$\tau \in (0,1)$, one can refine~$m$ to a~$\tau$-Moore box with the same associated zero by quasi-Newton iterations and reductions of~$r$ \cite[Algorithm~2]{guillemotValidatedNumericsAlgebraic2024}.  Equality of the zeros represented by two Moore boxes is decided by refining both to~$1/3$-Moore boxes and checking whether the shrunken boxes intersect.

\subsection*{Path tracking algorithm}
We focus on the algorithm implemented in Algpath; a precise description is given in~\cite{guillemotValidatedNumericsAlgebraic2024}.  Let~$F\colon \CC \times \CC^n \to \CC^n$ be a polynomial map.
The first argument is the parameter, put in subscript, so that~$F_t\colon \CC^n \to \CC^n$ is the map obtained from~$F$ by specialization of the parameter.
Let~$x \in \CC^n$ be a regular zero of~$F_0$, and let~$\zeta\colon [0, 1] \to \CC^n$ be the corresponding solution path.  The input zero is given by a Moore box~$m$, and Algpath alternates:
\begin{enumerate}
  \item \defi{correction}: refine~$m$ to a~$1/8$-Moore box;
  \item \defi{prediction}: compute $\delta > 0$ such that for all~$s \in [t,t+\delta]$,~$m$ is a~$7/8$-Moore box for~$F_s$, then update~$t$ to~$t+\delta$.
\end{enumerate}
When~$t=1$, the algorithm returns a Moore box for~$\zeta(1)$.  \textcite{guillemotValidatedNumericsAlgebraic2024} prove termination and correctness in a practical computational model, and explain how predictors and Taylor models improve efficiency.

To track a zero of~$F$ along a more general path~$\gamma \colon [0, 1] \to \CC$, we precompose~$F$ with~$\gamma$ in the~$t$ variable.  In this article, the parameter paths are piecewise linear loops, and we track each linear piece in turn.

\subsection*{Computing monodromy permutations}

Let~$F\colon \CC \times \CC^n \to \CC^n$ be a polynomial map, let~$X$ be the variety of pairs~$(t, x)$ such that~$F(t, x) = 0$, let~$p$ be the projection on the first variable, and denote by~$X_t = p^{-1}(t)$ the fiber above~$t \in \CC$.  
Let~$U \subseteq \CC$ over which $X_t$ contains finitely many regular points and has constant cardinality, so that $p$~induces a cover over~$U$.

Let~$\gamma\colon [0, 1] \to U$ be a loop based at~$b \in U$.  To compute the monodromy permutation associated to~$\gamma$, we first compute the fiber~$X_b$ and represent its points by Moore boxes.  We then run certified path tracking along~$\gamma$ from each point of the fiber.  The resulting ending boxes again represent the points of~$X_b$, and the monodromy permutation is obtained by identifying which starting and ending Moore boxes represent the same zero.

\subsection*{Implementation details}

Algpath is a Rust software package.
It provides a command-line interface (CLI) that accepts as input a JSON file describing a parametrized polynomial map~$F$, the parameter path, and a list of starting points.
For detailed usage documentation, see \cite{algpath}.

From the list of starting zeros given as input, Algpath tracks the corresponding solution paths independently and in parallel.
When computing a monodromy permutation on~$d$ points (using the \texttt{-{}-monodromy} CLI option), the program halts whenever~$d - 1$ path tracking processes finish, as a permutation is uniquely determined by its action on~$d-1$ points.
This improves computational time when one zero is slower to track.
For instance, consider the Belyi map with LMFDB entry \href{https://www.lmfdb.org/Belyi/7T7/6.1/5.2/4.2.1/a/}{\textsf{7T7-6.1\_5.2\_4.2.1-a}}, using the second embedding provided by the database.
When computing~$\sigma_0$ with one thread, one solution path accounts for 95\% of the total tracking time.
When Algpath is run with six threads, the early-termination optimization yields a factor-of-50 speedup over tracking all solution paths on this example.

The software relies on two interval arithmetics.
The first is a SIMD double-precision interval arithmetic implemented within Algpath, following \textcite{lambovIntervalArithmeticUsing2008}; we refer to it as SIMD interval arithmetic (currently, the software supports only x86-64 CPUs with AVX2).
The second is Arb \parencite{johanssonArbEfficientArbitraryPrecision2017}.
Although arbitrary-precision, as provided by Arb, is required for Algpath to terminate, SIMD interval arithmetic performs better when double precision suffices: for instance, using Arb to tackle a problem solvable in double precision is roughly 30--40 times slower than using SIMD interval arithmetic.
For this reason, the software switches between the two arithmetics depending on the precision required for the computation.
For more details, see \cite{guillemotCertifiedAlgebraicPath2026}.

Other key features include certified prediction along Hermite's predictor using Taylor models (see \cite{guillemotValidatedNumericsAlgebraic2024}), projective path tracking and straight-line program representation of~$F$.

\section{Verification of Belyi maps}
The method of~\textcite{klugNumericalCalculationThreepoint2014}, used for the LMFDB \cite{mustyDatabaseBelyiMaps2019}, computes equations for the Belyi map associated to a given permutation triple~$\sigma$.  Once the equations are computed, it remains to check that the monodromy triple of the equations is simultaneously conjugate to~$\sigma$.  Using Algpath, we perform this check for all Belyi maps present on the LMFDB.

We use the algorithm and Moore-box machinery described in Section 3, with the usual reduction of a finite map to a parametrized polynomial system.  In this section, we elaborate upon additional techniques, also found in other places in the literature, that improve performance: introducing $qz-1$ to remove the base locus, certifying a complete starting fiber by root isolation and a degree count, and representing a chosen complex embedding of a number field by interval coefficients.  Our contribution is to assemble these ingredients into an end-to-end rigorous workflow for the Belyi map models occurring in the LMFDB.  The principal new result is the certification of the monodromy data for all entries in the database.

\subsection*{Parametric system building}
We explain how to build a parametric system to compute the monodromy of a given Belyi map for each data type encountered on the LMFDB.

\subsubsection{Smooth model.}
In this case, the Belyi map is given by a polynomial~$f \in \CC[x, y]$ and a bivariate rational function~$p/q \in \CC(x, y)$.  By homogenizing, we obtain a smooth curve~$X \subseteq \PP^2$, and~$p/q$ induces a map~$\varphi\colon X \to \PP^1$.  The system we use to compute the monodromy is
\begin{equation}\label{eq:smooth-system}
    f = p-tq = 0.
\end{equation}
For fixed~$t \in \CC$, this system captures the fiber~$\varphi^{-1}(t)$ together with the points of~$X$ on which~$p$ and~$q$ vanish simultaneously.  Adding~$qz - 1 = 0$ removes the latter but may greatly slow down path tracking.  In practice, we compute starting solutions using the enlarged system and then track~\eqref{eq:smooth-system}, excluding the solutions on which~$q$ vanishes.  More precisely, from a Moore box for the enlarged system with center~$(x,y,z)$ we compute a Moore box for~\eqref{eq:smooth-system} centered at~$(x,y)$; Theorem~\ref{thm:moore-crit} ensures that~$q$ does not vanish on the associated zero.

\subsubsection{Smooth case with curve~$\PP^1$.}
Whenever the underlying curve is~$\PP^1$, the LMFDB provides a rational function~$p/q \in \CC(x)$ representing a map~$\PP^1 \to \PP^1$.  In this case, we compute the monodromy using~$p - tq = 0$ after checking that~$p$ and~$q$ are coprime.

\subsubsection{Plane models.}
Some entries provide, in addition to the smooth model, one polynomial equation~$f \in \CC[t, x]$ defining a possibly singular curve, and a constant~$\lambda \in \CC$.  If~$p \colon V(f) \to \CC$ is the projection on the variable~$t$, then~$\lambda p$ induces a map from the normalization of~$V(f)$ to~$\PP^1$, and we compute the monodromy by tracking the roots of~$f(x,\lambda^{-1}t)$.

\subsection*{Dealing with algebraic coefficients}
Algpath can handle polynomial systems with rational, floating point, or interval coefficients, whereas the equations defining Belyi maps may have algebraic coefficients.  Each LMFDB entry specifies a number field~$\QQ(\nu)$ by the minimal polynomial~$m_\nu$ of~$\nu$, and the coefficients of the equations are polynomials in~$\nu$ with rational coefficients.  Given an embedding~$\QQ(\nu)\hookrightarrow\CC$, corresponding to a root~$\alpha$ of~$m_\nu$, we obtain a Belyi map by evaluating at~$\alpha$.

One possibility is to treat~$\nu$ as an additional variable, add~$m_\nu$ to the system, and extend each starting zero with~$\alpha$ on the~$\nu$-coordinate.  This ensures termination but introduces a substantial cost, and the closeness of the roots of~$m_\nu$ may force small Moore boxes and hence small step sizes.

In practice, we instead compute a tight interval enclosure of~$\alpha$ and replace~$\nu$ by this interval in the system.  If the interval is not sufficiently tight, the algorithm may stall, but whenever a run succeeds, the result is correct.
Moreover, if we initially compute (an interval enclosing)~$\alpha$ with more precision than the precision required to track a zero, the software will terminate.
Although the precision required to track a zero cannot be known in advance, we observe experimentally that it usually remains below 200 bits.
Thus, we compute~$\alpha$ with~$10^4$ bits of precision; this is sufficient to prevent stalling in all the examples considered in this article, while this computation is inexpensive in \textsf{SageMath} and does impact the performance of Algpath, where costly computations happen at low precision.

This performs much better in practice: for the Belyi map with entry \href{https://www.lmfdb.org/Belyi/7T7/6.1/5.2/4.2.1/a/}{\textsf{7T7-6.1\_5.2\_4.2.1-a}}, adding the number field equation did not finish within~$6$ days, whereas replacing~$\alpha$ by an interval took less than~$20$ seconds.

\subsection*{Computing the fiber above the base point}
To compute the monodromy of a Belyi map, we first need a Moore box for each point in the fiber above the base point for the loops around~$0$ and~$1$.  We numerically compute the fiber and then certify~$1/3$-Moore boxes around the resulting approximations.  The whole fiber is captured as soon as the number of Moore boxes matches the degree of the map and they represent distinct zeros, which we check using the equality test of Section~\ref{sec:monodromy-path-tracking}.

In the smooth case where the curve is~$\PP^1$, or when a plane model is available, this amounts to solving a univariate polynomial with complex coefficients after embedding the equations into~$\CC$, so we use a standard root finder in \textsf{SageMath}.  Otherwise we use \texttt{Msolve} \cite{berthomieuMsolveLibrarySolving2021} on~\eqref{eq:smooth-system}, before embedding the equations into~$\CC$ and treating~$\nu$ as a variable.  This yields fibers, for all embeddings of the number field, which we sort using the~$\nu$-coordinate.  For each fiber, we also get the unwanted common locus of~$p$ and~$q$ over~$X$; to remove it, we discard the points whose evaluation by~$q$ is the smallest, until the size of the fiber matches the degree of the map.  We choose \texttt{Msolve} as it is directly accessible from \textsf{SageMath} and requires no additional setup, but we also experimented with \texttt{HomotopyContinuation.jl}~\cite{breidingHomotopyContinuationjlPackageHomotopy2018} and found it to be a suitable alternative for this task.

\subsection*{Practical results}
We compute the triples for all~$1111$ entries of the LMFDB database.
The certification code is written in \textsf{SageMath}; it computes the monodromy of a given Belyi map using Algpath and processes the output.
The results (triples, monodromy computation time using Algpath and verifications against the LMFDB for each entry) and the \textsf{SageMath} scripts can be found online \cite{ourcode}.  Using plane models when available, the whole computation takes~$1.2$ hours of CPU time and the total wall clock time is~$17$ minutes on~$6$ threads using an Intel Core i7-1370P CPU.

Using the resulting triples, we prove correctness of all monodromy groups in the database.  The computations brought to light some errors in the data.  First, the triples computed using plane models only correspond to the triples on the database up to an~$S_3$-action, coming from post-composition by an automorphism of~$\PP^1$ that permutes~$\{0,1,\infty\}$.  In this case, we check correctness only up to this~$S_3$-action, which can be recovered from the smooth models provided.  Second, there were 7 passports with data errors: in 5 cases, embeddings are in the wrong order and in the other 2 cases there seems to be duplicate data.  These issues have been reported.  Other than that, all remaining triples were proved correct.

Additionally, we verified the monodromy of the maps provided by \textcite{barthComputationBelyiMaps2021}, whose degree ranges from~$55$ to~$280$, with the exception of two: the two maps with highest degree ($266$ and~$280$) are defined over a non-trivial number field, and while their monodromy group could be verified using algebraic techniques, their monodromy triples remain unverified.

\section{Conclusion}

We have presented a certified method, implemented in Algpath, for recovering the monodromy of a Belyi map from exact equations over number fields.  The method tracks regular solutions of a parametrized polynomial system along loops using interval arithmetic, Krawczyk-based root isolation, and certified homotopy continuation.  Finally, we showed that this approach works at scale, certifying the monodromy triples of all Belyi maps currently in the LMFDB. 

\subsection*{Acknowledgements}
We are grateful to Anton Leykin, Pierre Lairez, and \'Eric Pichon-Pharabod for useful discussions, as well as to Joshua Perlmutter for his work at initial stages of the project.  
Guillemot was supported by the European Research Council (ERC) under the European Union's Horizon Europe research and innovation program, grant agreement 101040794 (10000 DIGITS).
Voight was supported by a grant from the Simons Foundation (SFI-MPS-Infra\-structure-00008650).

\renewcommand*{\bibfont}{\small}
\printbibliography
\end{document}